\newtheorem{proposition}{Proposition}
\theoremstyle{definition}
\newcommand{\beq}{\begin{equation}}
\newcommand{\eeq}{\end{equation}}
\newcommand{\ben}{\begin{enumerate}}
\newcommand{\een}{\end{enumerate}}
\newcommand{\bit}{\begin{itemize}}
\newcommand{\eit}{\end{itemize}}
\newcommand{\dys}{\displaystyle}
\newcommand{\vfi}{\varphi} 
\providecommand{\abs}[1]{\left|#1\right|}
\renewcommand{\k}{\mathtt k}
\title[Corrigendum]{Corrigendum to "Spectral optimization for weighted anisotropic problems with Robin conditions" [J. Differ. Equ. 378, 303--338, 2024]}
\date{}
\author{Benedetta Pellacci}
\address[B. Pellacci]{Dipartimento di Matematica e Fisica,
Universit\`a della Campania  ``Luigi Vanvitelli'',  via A. Lincoln 5, 81100
Caserta, Italy.}
\email[B. Pellacci]{benedetta.pellacci@unicampania.it}
\author{Giovanni Pisante}
\address[G. Pisante]{Dipartimento di Matematica e Fisica,
Universit\`a della Campania  ``Luigi Vanvitelli'',  via A. Lincoln 5, 81100
Caserta, Italy.}
\email[G. Pisante]{giovanni.pisante@unicampania.it}
\author{Delia Schiera}
\address[D. Schiera]{Departamento de Matemática do Instituto Superior Técnico, 
Universidade de Lisboa, 
Av. Rovisco Pais, 
1049-001 Lisboa, Portugal.}
\email[D. Schiera]{delia.schiera@tecnico.ulisboa.pt}
\begin{document}
\maketitle
The goal of this note is to fill a gap in the proof of the first two items of \cite[Theorem 5.1]{PPS}, which relies on Polya type inequalities and the characterization of the equality cases for monotone rearrangements (\cite[Propositions 4.1 and 4.2]{PPS}) whose statements and proofs require some adjustments.
More precisely, at the beginning of the proof of \cite[Proposition 4.1]{PPS} we stated that we can assume w.l.o.g. that the maximum point of $u$ is at the origin, which is actually not true in the anisotropic setting. An inspection of the proof shows that this assumption is only used to infer that $\rho'_{1}(\lambda)<0$ for any $i\in \{1,k-1\}$ in order to prove the validity of inequality (4.11) by estimating the left hand side  from below with the addendum depending on $\rho_{1}'(\lambda)$. Actually this information is needed only for the indices $i$ such that $N(i)=1$. Indeed, as soon as there are at least two elements in the sum, we can choose another $j$ to estimate from below and we can obtain the needed estimate. We start by proving the following more refined estimates. 
\begin{proposition}\label{polya monotona augmented}
Let  $u \in W^{1,p}(0, 1)$. If $u(0) \geq u(1)$ then 
\begin{equation}\label{Polya-down-aug} 
 \int_0^1 H^p(u') \geqslant b^p \int_0^1 |(u_*)'|^p + a^p \int_{O_{*}} |(u_*)'|^p, 
 \end{equation}
where we have defined
\[
O_{*}:= \big\{ t\in (0,1) \;:\; u_{*}(t) > u(0) \big\} \cup \big \{ t\in (0,1) \;:\;u_{*}(t) < u(1)\big\}. 
\]
Otherwise, if $u(0) \leq u(1)$ then 
\begin{equation}\label{Polya-up-aug}  
 \int_0^1 H^p(u') \geqslant a^p \int_0^1 |(u^*)'|^p + b^{p} \int_{O^{*}} |(u^*)'|^p,
\end{equation} 
where we have defined 
\[
O^{*}:= \big\{ t\in (0,1) \;:\; u^*(t) > u(1) \big\} \cup \big\{ t\in (0,1) \;:\; u^*(t) <u(0)\big\}. 
\] 

 \end{proposition}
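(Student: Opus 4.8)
The plan is to reduce \eqref{Polya-down-aug} to a pointwise inequality in the level variable $\lambda$, working within the level-set framework already in place for \cite[Proposition 4.1]{PPS}, and to replace the faulty normalization (placing the maximum at the origin) by a sharp elementary estimate that uses all the crossings simultaneously. First I would reduce, by approximation, to a $u$ that is continuous and piecewise strictly monotone. For a.e.\ $\lambda\in(\min u,\max u)$ I write the level set as $u^{-1}(\lambda)=\{\rho_1(\lambda)<\dots<\rho_{N(\lambda)}(\lambda)\}$ and set $s_i:=|\rho_i'(\lambda)|=|u'(\rho_i(\lambda))|^{-1}$; differentiating the distribution function gives $|u_*'(\sigma(\lambda))|^{-1}=\sum_{i=1}^{N(\lambda)}s_i$, where $\sigma=u_*^{-1}$. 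Labelling each crossing by $c_i:=a$ if $u$ ascends at $\rho_i(\lambda)$ and $c_i:=b$ if it descends, the coarea formula rewrites \eqref{Polya-down-aug} as the integral over $\lambda$ of
\[
\sum_{i=1}^{N(\lambda)} c_i^{\,p}\, s_i^{\,1-p}\;\geqslant\;\bigl(b^p+a^p\ind{E}(\lambda)\bigr)\Bigl(\sum_{i=1}^{N(\lambda)} s_i\Bigr)^{1-p},
\]
where $E:=(\min u,u(1))\cup(u(0),\max u)$ is exactly the set of levels corresponding to $O_*$.

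The core new ingredient is the elementary inequality
\[
\sum_{i=1}^{N} c_i^{\,p}\, s_i^{\,1-p}\;\geqslant\;\Bigl(\sum_{i=1}^{N} c_i\Bigr)^{p}\Bigl(\sum_{i=1}^{N} s_i\Bigr)^{1-p},
\]
valid for all $s_i>0$, $c_i>0$. I would prove it by H\"older's inequality with exponents $p$ and $p'=p/(p-1)$ (the case $p=1$ being trivial): since $(c_i^{\,p}s_i^{\,1-p})^{1/p}\,s_i^{\,1/p'}=c_i$, one obtains $\sum_i c_i\leqslant\bigl(\sum_i c_i^{\,p}s_i^{\,1-p}\bigr)^{1/p}\bigl(\sum_i s_i\bigr)^{1/p'}$, which is the assertion after raising to the $p$-th power. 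Crucially this uses every crossing at once, so it no longer requires singling out the maximum point, which was the source of the gap.

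It then remains to bound $\sum_i c_i$ below by the right constant, and this is where the hypothesis $u(0)\geqslant u(1)$ enters through a crossing count. For every admissible $\lambda$ the endpoint values force at least one descending crossing, hence some $c_i=b$ and $\sum_i c_i\geqslant b$, producing the principal term. If in addition $\lambda\in E$, then $u(0)$ and $u(1)$ lie on the same side of $\lambda$, so $u$ must both rise above and fall below $\lambda$: there are at least one ascending and one descending crossing, whence $\sum_i c_i\geqslant a+b$, and $(a+b)^p\geqslant a^p+b^p$ for $p\geqslant 1$ yields the extra term $a^p(\sum_i s_i)^{1-p}$. Integrating these pointwise bounds in $\lambda$ gives \eqref{Polya-down-aug}, and \eqref{Polya-up-aug} follows by applying the result to $-u$ with the roles of $a$ and $b$ interchanged (which turns $u_*$ into $-u^*$ and $O_*$ into $O^*$). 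I expect the main obstacle to be the rigorous justification of the coarea identities and of the crossing count for a general Sobolev function, i.e.\ the treatment of critical levels and flat parts, which I would handle by the approximation scheme of the original proof; the algebraic heart of the matter is the two displayed inequalities above.
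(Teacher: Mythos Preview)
Your argument is correct and follows the same level-set/coarea framework as the paper: reduce to piecewise monotone $u$, rewrite both sides as integrals over the level variable $\lambda$ via $|\rho_j'(\lambda)|=|u'(\rho_j(\lambda))|^{-1}$ and $|(u_*)'|^{-1}=\sum_j|\rho_j'|$, and then compare pointwise in $\lambda$ using a crossing count. The crossing-count step is identical to the paper's: for $\lambda$ strictly between $u(1)$ and $u(0)$ the first crossing is descending, while for $\lambda$ outside that range (your set $E$, the paper's indices $i\in I$) the number $N$ of crossings is even and the alternating signs force at least one ascent and one descent.

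The one genuine difference is the elementary inequality you invoke. You use H\"older to obtain
\[
\sum_i c_i^{\,p}s_i^{\,1-p}\ \geq\ \Bigl(\sum_i c_i\Bigr)^{p}\Bigl(\sum_i s_i\Bigr)^{1-p},
\]
and then bound $\sum_i c_i$ from below by $b$ or by $a+b$, finally using $(a+b)^p\geq a^p+b^p$. The paper instead uses the cruder term-by-term bound $s_i^{\,1-p}\geq(\sum_j s_j)^{1-p}$, which gives directly
\[
\sum_i c_i^{\,p}s_i^{\,1-p}\ \geq\ \Bigl(\sum_i c_i^{\,p}\Bigr)\Bigl(\sum_i s_i\Bigr)^{1-p},
\]
and then bounds $\sum_i c_i^{\,p}$ by $b^p$ or by $a^p+b^p$ via the same parity observation. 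Your intermediate estimate is sharper (since $(\sum c_i)^p\geq\sum c_i^p$ for $p\geq1$), but the paper's is more elementary and avoids the extra step through $(a+b)^p\geq a^p+b^p$; for the stated proposition both routes land on exactly the same final bound.
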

\begin{proof} The proof follows the line  of \cite[Proposition 4.1]{PPS}, therefore we use the same notation and highlight here only the differences. As already pointed out, to prove \eqref{Polya-down} or \eqref{Polya-up} we can assume w.l.o.g. that  $u > 0$ and piecewise affine but we cannot assume that $u$ has a maximum at the origin. As a consequence we don't know a priori that the sign of $\rho_{1}'$ is always negative and we can only infer that
 \[ 
\text{sign}\left(\rho_j'(\lambda) \right)= \text{sign}\left(u'(\rho_j(\lambda)) \right) = (-1)^{j+1}s_{i}, \;\;\; \text{ with }\;\; s_{i}:= \text{sign}\left(u'(\rho_1(\lambda)) \right) =  \text{sign}(\rho'_1(\lambda))\]
 and therefore \cite[equation (4.9) and (4.8)]{PPS} have to be replaced by 

\begin{equation*}\label{eq:rhostar}  \rho_* (\lambda)= 
\begin{cases}
\displaystyle  \frac{1+s_{i}}{2}+s_{i}\sum_{j=1}^{N(i)} (-1)^{j} \rho_j(\lambda)  &  \text{ if $N(i)$ is odd,}  \\
\displaystyle \frac{1-s_{i}}{2}+s_{i}\sum_{j=1}^{N(i)} (-1)^{j} \rho_j(\lambda)  & \text{ if $N(i)$ is even,} 
\end{cases} \end{equation*}
and
\begin{equation}\label{rho*'} 
\rho_*'(\lambda)=s_{i}\sum_{j=1}^{N(i)} (-1)^j  \rho_j'(\lambda)=-\sum_{j=1}^{N(i)}|\rho'_j(\lambda)|. 
\end{equation}
As a consequence, performing the change of variable $x=\rho_{*}(\lambda)$, we have
\[
\int_{E_i} |u'_*(x)|^{p}dx  =
\int_{a_i}^{a_{i+1}} \left(\sum_{j=1}^{N(i)} \abs{ \rho'_j (\lambda)} \right)^{1-p}  d\lambda.
\]
Moreover, we have
\[
\int_{D_i} H^p(u'(x))dx = \sum_{j=1}^{N(i)} \int_{a_i}^{a_{i+1}}  \gamma_{j}^{i} \abs{ \rho'_j (\lambda)} ^{1-p} d\lambda
\]
with 
\[
\gamma_{j}^{i}  := \frac{1-s_{i}(-1)^{j}}{2} a^{p}+\frac{1-s_{i}(-1)^{j+1}}{2} b^{p}.
\]
Therefore, recalling that for any $j=1,\dots, N(i) $ and $i=1,\dots,k$ it holds
\begin{equation*}\label{ineq} 
\abs{  \rho'_j (\lambda)}^{1-p} \geqslant \left(  \sum_{j=1}^{N(i)} \abs{\rho'_j(\lambda)} \right)^{1-p}.
\end{equation*}
We infer 
\begin{equation}
\int_{D_i} H^p(u'(x))dx \geq \left (\sum_{j=1}^{N(i)}  \gamma_{j}^{i} \right) \int_{a_i}^{a_{i+1}} {\left( \sum_{j=1}^{N(i)} \abs{\rho'_j (\lambda)} \right)^{1-p}} d\lambda
\end{equation} 
Now we observe that if $ \min \{u(0),u(1)\} \leq a_{i} < \max \{u(0),u(1)\}$, then $s_{i}=\text{sign}(u(1)-u(0))$ and $\gamma_{1}^{i} = b^{p}$ if $u(0)\geq u(1)$ and $\gamma_{1}^{i} = a^{p}$ if  $u(0)\leq u(1)$. On the other hand,  if $a_{i}\geq \max \{u(0),u(1)\}$ or $a_{i} < \min \{u(0),u(1)\}$, then $N(i)$ is even, and so 
\[
\sum_{j=1}^{N(i)}  \gamma_{j}^{i} \geq a^{p}+b^{p}.
\]
In the case $u(0)\geq u(1)$, the proof follows since, up to a set of measure zero, 
\[
O_{*} \cap \{u'\not= 0\} = \bigcup_{i \in I} (a_{i },a_{i +1}) \;,\;\; I=\big\{ i \;: \; a_{i}\geq \max \{u(0),u(1)\} \text{ or } a_{i} < \min \{u(0),u(1)\}\big \}.
\]
The other case is analogous up to defining the set $D_{i }$ using the monotone increasing rearrangement.
\end{proof}

As a consequence of the previous Proposition, we get the following Polya type inequalities whith a characterizaton of the equality cases that replaces \cite[Proposition 4.1]{PPS}. 

\begin{proposition}\label{polya monotona}
Let  $u \in W^{1,p}(0, 1)$. If $u(0) \geq u(1)$ then 
\begin{equation}\label{Polya-down} 
 \int_0^1 H^p(u') \geqslant  \int_0^1 H^p((u_{*})'); 
 \end{equation}
otherwise, if $u(0) \leq u(1)$ then 
\begin{equation}\label{Polya-up}  
 \int_0^1 H^p(u') \geqslant  \int_0^1 H^p((u^*)'). 
 \end{equation} 
 Moreover if equality is attained in \eqref{Polya-down} or \eqref{Polya-up}, then $u$ is monotone.
 \end{proposition}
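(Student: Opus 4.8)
The plan is to read off both inequalities directly from Proposition~\ref{polya monotona augmented}, and then to extract monotonicity at equality by tracing back the one estimate in its proof that is strict unless a single branch survives.

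First I would record that in one dimension the density reduces to $H^p(\xi)=a^p\xi_+^p+b^p\xi_-^p$ (this is exactly the content of the coefficients $\gamma_j^i$ computed above, since a positive slope carries the weight $a^p$ and a negative one the weight $b^p$). Consequently, as $u_*$ is nonincreasing we have $(u_*)'\le 0$ a.e., so $H^p((u_*)')=b^p\abs{(u_*)'}^p$ and $\int_0^1 H^p((u_*)')=b^p\int_0^1\abs{(u_*)'}^p$; likewise $u^*$ is nondecreasing, whence $\int_0^1 H^p((u^*)')=a^p\int_0^1\abs{(u^*)'}^p$. With this identification, \eqref{Polya-down-aug} reads
\[
\int_0^1 H^p(u')\ \geqslant\ \int_0^1 H^p((u_*)')+a^p\int_{O_*}\abs{(u_*)'}^p\ \geqslant\ \int_0^1 H^p((u_*)'),
\]
the last term being nonnegative, which is precisely \eqref{Polya-down}; inequality \eqref{Polya-up} follows symmetrically from \eqref{Polya-up-aug}. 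This disposes of both inequalities with no further work.

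For the equality statement I would treat the case $u(0)\ge u(1)$ (the other being analogous). Equality in \eqref{Polya-down} forces the whole chain displayed above to consist of equalities; in particular $a^p\int_{O_*}\abs{(u_*)'}^p=0$, so (as $a>0$) $u_*$ is constant on $\{u_*>u(0)\}$ and on $\{u_*<u(1)\}$, and moreover the augmented inequality \eqref{Polya-down-aug} is \emph{itself} an equality. Unwinding the proof of Proposition~\ref{polya monotona augmented}, the latter requires, for a.e.\ level $\lambda$, equality in the pointwise bound $\abs{\rho_j'(\lambda)}^{1-p}\geqslant\big(\sum_{k=1}^{N(i)}\abs{\rho_k'(\lambda)}\big)^{1-p}$. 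Here is the crux: since $p>1$ the map $t\mapsto t^{1-p}$ is strictly decreasing on $(0,\infty)$, so equality can hold only if $\abs{\rho_j'(\lambda)}=\sum_{k=1}^{N(i)}\abs{\rho_k'(\lambda)}$, i.e.\ at most one branch $\rho_k$ has nonzero derivative. In view of \eqref{rho*'}, at a.e.\ level $\lambda$ with $(u_*)'\neq 0$ the equation $u(x)=\lambda$ therefore admits a single solution, that is $N(i)=1$.

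Finally I would promote this fibrewise information to global monotonicity. The flatness of $u_*$ above $u(0)$ and below $u(1)$, together with continuity of $u_*$, forces the essential supremum of $u$ to equal $u(0)$ and its essential infimum to equal $u(1)$, so that every level carrying variation is an \emph{inside} level and the single-preimage property holds for a.e.\ $\lambda\in(u(1),u(0))$. Since $u\in W^{1,p}(0,1)\hookrightarrow C([0,1])$, a continuous function attaining almost every value of its range exactly once cannot possess an interior strict extremum, and is hence monotone. I expect the main obstacle to be precisely this last passage: correctly interpreting the equality conditions fibre by fibre and ruling out excursions of $u$ outside $[u(1),u(0)]$, so that the strict gain $\abs{\rho_j'(\lambda)}^{1-p}>\big(\sum_{k}\abs{\rho_k'(\lambda)}\big)^{1-p}$, present as soon as $N(i)\ge 2$ on a set of levels of positive measure, is genuinely converted into global monotonicity of $u$.
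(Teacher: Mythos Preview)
Your derivation of \eqref{Polya-down} and \eqref{Polya-up} from Proposition~\ref{polya monotona augmented}, and your use of the vanishing of $\int_{O_*}|(u_*)'|^p$ to force $u(0)=\max u$ and $u(1)=\min u$, match the paper exactly. The divergence is in the final monotonicity step: the paper does \emph{not} trace equality back through the proof of Proposition~\ref{polya monotona augmented}. Instead, once $u(0)=\max u$ and $u(1)=\min u$ are secured, it invokes the iterative construction of \cite[Proposition~4.2]{PPS}: split $u$ at a point $\kappa$, form the auxiliary function $v$ by pasting the decreasing rearrangements of the two pieces, observe that the boundary orderings $u_1(0)\ge u_1(\kappa)$, $u_2(0)\ge u_2(1-\kappa)$, $v(0)\ge v(1)$ now hold, and apply the already-proved inequality \eqref{Polya-down} to $u_1$, $u_2$ and $v$.

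Your alternative route has a gap that you yourself flag at the end. The proof of Proposition~\ref{polya monotona augmented} is carried out for piecewise affine $u$ and then extended to $W^{1,p}$ by density; density arguments do not propagate equality conditions, so your conclusion ``$N(i)=1$ at a.e.\ level'' is, as written, only available when $u$ is piecewise affine. To make your argument work for a general $u\in W^{1,p}(0,1)$ you would have to rerun the coarea computation directly for absolutely continuous $u$ and justify the branch structure $\rho_j$ in that generality---feasible in one dimension, but not done here. The paper's recursive scheme sidesteps this entirely: it applies only the inequality \eqref{Polya-down} (already valid on all of $W^{1,p}$) to auxiliary functions, so no equality-tracing through the piecewise-affine reduction is ever needed.
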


\begin{proof}
We only need to justify the case of equality, since inequalities \eqref{Polya-down} and  \eqref{Polya-up} immediately follow from Proposition \ref{polya monotona augmented}. 

For the equality case, we first observe that if equality holds in \eqref{Polya-down}, by \eqref{Polya-down-aug} we infer that the set $O_{*}$ has measure zero. This ensure that, if $u$ is not constant, then 
\[
u(0) =   \max_{t\in [0,1]} u(t) > \min_{t\in [0,1]} u(t)   = u(1).
\] 
The proof follows exactly as in \cite[Proposition 4.2]{PPS} once we note that, given $\kappa$ as in \cite[(4.13)]{PPS},  the functions $u_{1}=u(t)$ for $t\in (0,\kappa)$, $u_{2}=u(t+\kappa)$ for $t\in (0, 1-\kappa)$ and 
\[ v(x):=\begin{cases}
(u_1)_{*}(x) &\text{ in } (0, \kappa)\\
(u_2)_{*}(x-\kappa) & \text{ in } (\kappa, 1).
\end{cases}\]
satisfy $u_{1}(0) \geq u_{1}(\kappa)$, $u_{2}(0)\geq u_{2}(1-\kappa)$ and $v(0)\geq v(1)$. 
Moreover, we observe that by a density argument Proposition \ref{polya monotona} can be applied to $v$  (see \cite[Remark 2.23a]{Kawohl}). The corresponding result, when equality holds in \eqref{Polya-up} and $u(0) \leq u(1)$, can be proved analogously by considering the auxiliary function 
\[ w(x):=\begin{cases}
u_1^*(x) &\text{ in } (0, \kappa)\\
u_2^*(x-\kappa) & \text{ in } (\kappa, 1).
\end{cases}\]
Instead of $v$ and applying \eqref{Polya-up}. 
\end{proof}

Another consequence that replace \cite[Proposition 4.2]{PPS} is the following.
\begin{proposition}\label{equality polya} 
Let  $u \in W^{1,p}(0, 1)$, then
\begin{equation} \label{polya:ineq}
\int_0^1 H^p(u') \geqslant \min\left\{ \int_0^1 H^p((u_*)'), \int_0^1 H^p((u^*)') 
\right\}
\end{equation}
moreover if equality holds in \eqref{polya:ineq} then $u$ is monotone.
 \end{proposition}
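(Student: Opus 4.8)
The plan is to derive this directly from Proposition~\ref{polya monotona}, which already contains almost everything needed. The key observation is that Proposition~\ref{polya monotona} is stated with a hypothesis on the relative values of $u(0)$ and $u(1)$, but for an arbitrary $u \in W^{1,p}(0,1)$ exactly one of the two conditions $u(0) \geq u(1)$ or $u(0) \leq u(1)$ must hold (both hold iff $u(0)=u(1)$). So the inequality \eqref{polya:ineq} should follow by a simple case distinction.

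First I would split into the two cases. If $u(0) \geq u(1)$, then \eqref{Polya-down} gives $\int_0^1 H^p(u') \geq \int_0^1 H^p((u_*)')$, which is certainly bounded below by the minimum of the two rearrangement energies, yielding \eqref{polya:ineq}. Symmetrically, if $u(0) \leq u(1)$, then \eqref{Polya-up} gives the bound with $u^*$, again dominating the minimum. Since every $u$ falls into at least one case, \eqref{polya:ineq} holds in general.

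For the equality case I would argue as follows. Suppose equality holds in \eqref{polya:ineq}, and suppose for contradiction that $u$ is not monotone. Then in particular $u$ is not constant, so $u(0) \neq u(1)$ unless $u$ attains interior extrema; I would treat the generic case $u(0) > u(1)$ (the case $u(0) < u(1)$ is symmetric, and I will handle $u(0)=u(1)$ separately). When $u(0) > u(1)$, only the first branch of Proposition~\ref{polya monotona} applies, and equality in \eqref{polya:ineq} forces $\int_0^1 H^p(u') = \int_0^1 H^p((u_*)')$, i.e.\ equality in \eqref{Polya-down}; by Proposition~\ref{polya monotona} this implies $u$ is monotone, a contradiction. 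The delicate point is the boundary case $u(0)=u(1)$: here both \eqref{Polya-down} and \eqref{Polya-up} apply and one must check that equality in the minimum still forces monotonicity. I expect this to be the main obstacle, since a nonconstant $u$ with $u(0)=u(1)$ cannot be monotone at all, so one must show equality in \eqref{polya:ineq} simply cannot occur unless $u$ is constant.

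To resolve this obstacle, I would invoke the augmented inequalities of Proposition~\ref{polya monotona augmented} rather than just Proposition~\ref{polya monotona}. When $u(0)=u(1)=:c$ and $u$ is nonconstant, the set $O_*$ (equivalently $O^*$) has positive measure, because $u_*$ must take values strictly above $c$ (as $\max u > c$) on a set of positive measure; hence the extra term $a^p \int_{O_*}|(u_*)'|^p$ in \eqref{Polya-down-aug} is strictly positive, giving a \emph{strict} inequality $\int_0^1 H^p(u') > \int_0^1 H^p((u_*)')$, and likewise for $u^*$. Therefore equality in \eqref{polya:ineq} is impossible, so the assumed non-monotonicity (which for $u(0)=u(1)$ is equivalent to non-constancy) cannot coexist with equality. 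Combining all cases, equality in \eqref{polya:ineq} forces $u$ to be monotone.
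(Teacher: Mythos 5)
Your proof is correct, and for the inequality it coincides with the paper's: both reduce \eqref{polya:ineq} to Proposition~\ref{polya monotona} by distinguishing the cases $u(0)\geq u(1)$ and $u(0)\leq u(1)$. For the equality case the paper simply defers to the argument of \cite[Proposition 4.2]{PPS} (with the corrected ingredients substituted in), whereas you give a self-contained reduction: the sandwich $\int_0^1 H^p(u')\geq \int_0^1 H^p((u_*)')\geq \min\{\cdot,\cdot\}$ shows that equality in \eqref{polya:ineq} forces equality in \eqref{Polya-down} (resp.\ \eqref{Polya-up}), and then the equality case of Proposition~\ref{polya monotona} yields monotonicity. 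This is arguably preferable in a corrigendum, since it does not send the reader back to the original paper. Two small remarks. First, your separate treatment of $u(0)=u(1)$ is not strictly necessary: Proposition~\ref{polya monotona} is stated under the non-strict hypothesis $u(0)\geq u(1)$, and its proof already shows (via $O_*$ being a null set) that equality in \eqref{Polya-down} forces a nonconstant $u$ to satisfy $u(0)>u(1)$; so the sandwich argument alone covers the degenerate case. Second, if you do keep that case, you should justify why $\int_{O_*}|(u_*)'|^p>0$: positive measure of $O_*$ does not by itself give a positive integral, since $(u_*)'$ could in principle vanish there. The clean way is to observe that $O_*=(0,1)\setminus\{u_*=c\}$ with $c=u(0)=u(1)$, that $(u_*)'=0$ a.e.\ on the level set $\{u_*=c\}$, and hence $\int_{O_*}|(u_*)'|^p=\int_0^1|(u_*)'|^p>0$ whenever $u$ is nonconstant. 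With these touches the argument is complete.
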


\begin{proof}
 As $u_*(x)=u^*(1-x)$ (see for example \cite[Section 1.4]{Kesavan}), denoting by
 \[
 K:= \int_0^1 |(u_*)'|^p = \int_0^1 |(u^*)'|^p,
 \] 
from Proposition \ref{polya monotona} we readily infer that 
\begin{equation*} 
\int_0^1 H^p(u') \geqslant \min\left\{ \int_0^1 H^p((u_*)'), \int_0^1 H^p((u^*)') 
\right\} = \min \{a^{p},b^{p}\} K. 
\end{equation*}
For the equality case, if $a>b$, the proof goes as in \cite[Proposition 4.2]{PPS}, up to using inequality \eqref{polya:ineq} instead of applying  \cite[Proposition 4.1]{PPS}. In the case $a < b$ we need to substitute decreasing rearrangements with increasing ones. 
\end{proof}

We are now ready to give a correct proof of conclusons $1.$ and $2.$ of \cite[Theorem 5.1]{PPS}.

\begin{proof}[Proof of conclusion $1$]
Let $a > b$, the other case follows analogously. Notice that, by elliptic regularity, $\vfi'\in C([0,1])$, so that,
if $\k>0$ the boundary conditions immediately imply that $\vfi$ achieves its maximum in $(0,1)$.
Let $\alpha>0$ be the first global maximum of $\vfi$. 

Let us define
\[ \vfi^R(x):=\begin{cases}
\vfi^*(x) & \text{ in } (0, \alpha)\\
\vfi_*(x) &\text{ in } (\alpha, 1), 
\end{cases} 
\; \text{ and }\;
 m^R(x):=\begin{cases}
m^*(x) & \text{ in } (0, \alpha)\\
m_*(x) &\text{ in } (\alpha, 1). 
\end{cases}\]
On $(0, \alpha)$, since $\alpha$ is a global maximum, we can use the second conclusion of Proposition \ref{polya monotona} to get
\[ \int_0^\alpha H^p(\vfi') \geqslant \int_0^\alpha H^p((\vfi^*)') = \int_0^\alpha H^p((\vfi^R)'). \]
On the other hand, the first conclusion of  Proposition \ref{polya monotona} yields
\[ \int_\alpha^1 H^p(\vfi') \geqslant \int_\alpha^1 H^p((\vfi_*)')= \int_\alpha^1 H^p((\vfi^R)').\]
Then, arguing as in \cite[Theorem 5.1]{PPS}, we infer
\begin{equation*}\label{ug crescente} \int_0^\alpha H^p(\vfi') = \int_0^\alpha H^p((\vfi^*)'), \quad \int_\alpha^1 H^p(\vfi') = \int_\alpha^1 H^p((\vfi_*)'). \end{equation*}
Therefore, we can apply Proposition \ref{polya monotona} to get that $\vfi$ is increasing in $(0,\alpha)$ and decreasing in $(\alpha, 1)$.
\end{proof}

\begin{proof}[Proof of conclusion $2$]
Let $a > b$ and consider the decreasing rearrangements $\varphi_*$ and $m_*$. 
Then, by Proposition \ref{equality polya}
\[ 
\Lambda^+= \frac{\int_0^1 H(\varphi')^p }{ \int_0^1 m(x) \varphi^p} \geqslant\frac{\int_0^1 H((\varphi_*)')^p }{ \int_0^1 m_*(x) \varphi_*^p}
 \geqslant \Lambda^+. 
\]
Therefore, equality holds, and by the equality case in Proposition \ref{equality polya} one obtains that $\varphi$ is monotone (decreasing). 
The case $a < b$ follows similarly, up to considering increasing rearrangements. 
\end{proof}


\begin{thebibliography}{99}


\bibitem{BLR} Berestycki H., Lachand-Robert T., Some properties of monotone rearrangement with applications to elliptic equations in cylinders. {\em Math. Nachr.} 266:3--19, 2004.
\bibitem{Kawohl} Kawohl, B., {\em Rearrangements and convexity of level sets in PDE. }Lecture Notes in Mathematics, 1150. Springer-Verlag, Berlin, 1985. iv+136 pp. 
\bibitem{Kesavan} Kesavan, S., {\it Symmetrization \& applications}, Series in Analysis, 3, World Sci. Publ., Hackensack, NJ, 2006.
\bibitem{PPS} Pellacci B., Pisante G., Schiera D., Spectral optimization for weighted anisotropic problems with Robin conditions, {\em J. Differential Equations} 378, 303–338, 2024.

\end{thebibliography}
\end{document}